\numberwithin{equation}{section}
\newcommand{\bea}{\begin{eqnarray}}
\newcommand{\eea}{\end{eqnarray}}
\newcommand{\be}{\begin{eqnarray*}}
\newcommand{\ee}{\end{eqnarray*}}
\newtheorem{theorem}{Theorem}[section]
\newtheorem{corollary}{Corollary}[section]
\newtheorem{proposition}{Proposition}[section]
\begin{document}
\title[Ideals of Clifford Algebras] {Ideal Structure of Clifford Algebras}
\author[Y. M. Zou]{Yi Ming Zou}
\address{Department of Mathematical Sciences, University of Wisconsin, Milwaukee, WI 53201, USA} \email{ymzou@uwm.edu}
\thanks{AMS Subject Classification 2000: 11E88, 15A66}
\thanks{The original publication is available at www.springerlink.com}
\begin{abstract}
    The structures of the ideals of Clifford algebras which can be both infinite dimensional and degenerate over the real numbers are investigated. 
\end{abstract}
\maketitle
\section{Introduction}
\par
Clifford algebras have been playing an important role in describing electron spin, supersymmetry, and the fundamental representations of the orthogonal groups etc. We refer the readers to the introduction section of [7] for a discussion of the role played by Clifford algebras in quantum mechanics. Recently, there has been an increase of interest in the applications of Clifford algebras to other areas such as computational geometry and engineering. In these applications, the real Clifford algebras are usually called geometric algebras. It is shown in [7] that in general an infinite dimensional Clifford algebra over the real numbers contains all finite dimensional Clifford algebras over the real numbers as well as all finite dimensional Clifford algebras over the complex numbers. Therefore it is natural to study the structure of a real Clifford algebra under the general setting which allows both infinite dimensionality and degeneracy.
\par
The structures of the finite dimensional Clifford algebras associated to nondegenerate quadratic forms have been well understood for a long period of time [2]. These Clifford algebras are either full matrix algebras or the direct sums of two full matrix algebras over the real numbers $\mathbb{R}$, or the complex numbers $\mathbb{C}$, or the quaternions $\mathbb{H}$. The study of the idempotent structures of the finite dimensional Clifford algebras over the real numbers, including the degenerate cases, was carried out in [1] and [5]. Somewhat later, [7] and [8] investigated the structures of the infinite dimensional Clifford algebras over the real numbers. As pointed out in [7], the infinite dimensional Clifford algebras over the real number arise naturally in physics when an infinite dimensional real Hilbert space is considered, and these algebras are connected to the infinite dimensional orthogonal groups and spin groups. Although in general an infinite dimensional Clifford algebra contains the finite dimensional ones, the study of the structure of an infinite dimensional Clifford algebra relies on the knowledge of the structures of the finite dimensional ones. Our goal here is to answer some of the questions about the ideal structures of these algebras under the assumption that the Clifford algebras can be finite or infinite dimensional as well as degenerate. 
\par 
\section{Preliminaries}
We recall the definition of a general Clifford algebra [7]. Let $M, P, Z\subseteq\mathbb{Z}_{+}=\{0,1,2,\ldots\}$ and let $\Lambda =M\cup P\cup Z$. Let $q=\mid M\mid$, $p=\mid P\mid$, and $z=\mid Z\mid$. We allow $p$, $q$, and $z$ to be $\infty$. Let $\mathscr{C}=\mathscr{C}(p, q, z)$ be the Clifford algebra over the real numbers $\mathbb{R}$ generated by $\{e_{\lambda} : \lambda\in \Lambda\}$. That is, $\mathscr{C}$ is the associative algebra (with $1$) generated by the $e_{\lambda}, \lambda\in\Lambda$, such that
\bea
e_{\lambda}e_{\mu}=-e_{\mu}e_{\lambda}, \quad \lambda\ne\mu,\\
e_{\lambda}^{2}=\left\{ \begin{array}{rl} 
1, & \lambda\in P,\\
-1, & \lambda\in M,\\
0, & \lambda\in Z.
\end{array}\right.
\eea
By an ordered subset $\{i_{1}, i_{2}, \ldots \}$ of $\mathbb{Z}_{+}$ we mean that the condition $i_{1}<i_{2}<\cdots$ hold, and we shall denote an ordered subset of $\mathbb{Z}_{+}$ by $I=(i_{1},i_{2},\ldots)$. For a finite ordered subset $I=(i_{1},\ldots, i_{k})$ of $\mathbb{Z}_{+}$, let $e_{I}=e_{i_{1}}\cdots e_{i_{k}}$. If $I=\emptyset$, then $e_{I}=1$. When we write $e_{I}e_{J}e_{K}$, we always assume that $I,J,K$ are finite ordered sets.
\par
It follows from the definition that the set of elements
\bea
\{e_{I}e_{J}e_{K}: I\subseteq P, J\subseteq M, K\subseteq Z\} 
\eea
form a basis of $\mathscr{C}$.
\par 
We let $\mathscr{C}(p,q)$ be the subalgebra generated by $\{e_{\lambda} : \lambda\in M\cup P\}$, and let $\mathscr{C}(z)$ be the subalgebra generated by $\{e_{\lambda} : \lambda\in Z\}$. 
\par
If $z=0$, then $\mathscr{C}=\mathscr{C}(p,q)$, and we divide the algebras into two cases: $p+q<\infty$ and $p+q =\infty$.
\par 
When $p+q=\infty$, $\mathscr{C}$ is simple [7] and the idempotent structure is studied in [8]. In particular, it is proved in [8] that there is no primitive idempotent in $\mathscr{C}$ and there exists an infinite set of pair-wise orthogonal idempotents. 
\par
If $p+q <\infty$, then $\mathscr{C}$ is simple (a full matrix algebra) if $p-q\ne 1, 5$ modulo 8, and $\mathscr{C}$ is a direct sum of two isomorphic simple ideals (two full matrix algebras) if $p-q= 1, 5$ modulo 8 [6, pp.132-133]. Therefore, in this case, the information about the ideals and idempotents of $\mathscr{C}$ can be readily obtained by using the corresponding results of matrices. For our convenience, if $\mathscr{C}$ is not simple, we write 
\bea
\mathscr{C}=\mathscr{C}_{1}\oplus\mathscr{C}_{2},
\eea
where $\mathscr{C}_{1}$ and $\mathscr{C}_{2}$ are the two simple ideals.
\par
Recall that an ideal $I$ of an algebra (associative) $A$ is nil if for $\forall x\in A$ there exists a positive integer $n$ such that $x^{n}=0$, and the nil radical $\mathscr{R}$ of $A$ is the sum of all nil ideals.
\par
If $z>0$, then the nil radical $\mathscr{R}$ of $\mathscr{C}$ is non-zero. By (2.3) and Proposition 2.1 below, we have that  
\bea
\mathscr{C}=\mathscr{C}(p,q)\oplus\mathscr{R}
\eea
is a vector space direct sum. In this case, the structure of the idempotents of a finite dimensional $\mathscr{C}$ is discussed in [1] by using the fact that the nil radical is idempotent-lifting.
\par
The following proposition is given in [7], we provide a complete proof here. 
\begin{proposition}
The nil radical $\mathscr{R}$ is generated by $\{e_{\lambda}: \lambda\in Z\}$.
\end{proposition}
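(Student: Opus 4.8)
\emph{Plan.} Write $\mathscr{N}$ for the two-sided ideal of $\mathscr{C}$ generated by $\{e_{\lambda}:\lambda\in Z\}$; the goal is to prove $\mathscr{N}=\mathscr{R}$. The first step is to pin down $\mathscr{N}$ in terms of the basis (2.3). Let $V$ be the linear span of those basis vectors $e_{I}e_{J}e_{K}$ with $K\neq\emptyset$. I would check that $V$ is a two-sided ideal: left or right multiplication of such a basis vector by a generator $e_{\mu}$ either kills it (when $\mu\in Z\cap K$, since $e_{\mu}^{2}=0$) or returns $\pm e_{I'}e_{J'}e_{K'}$ with $K'$ still non-empty. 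Clearly $e_{\lambda}\in V$ for $\lambda\in Z$, and conversely, if $K=(k_{1},\ldots,k_{r})\neq\emptyset$ then moving $e_{k_{1}}$ to the front shows $e_{I}e_{J}e_{K}\in\mathscr{C}e_{k_{1}}\subseteq\mathscr{N}$; hence $\mathscr{N}=V$. The same computation shows that the algebra map $\pi\colon\mathscr{C}\to\mathscr{C}(p,q)$ which fixes $e_{\lambda}$ for $\lambda\in P\cup M$ and sends $e_{\lambda}\mapsto 0$ for $\lambda\in Z$ is well defined and surjective, with kernel exactly $\mathscr{N}$ (on the basis, $\pi$ kills precisely the $e_{I}e_{J}e_{K}$ with $K\neq\emptyset$ and is injective on the rest). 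Thus $\mathscr{C}/\mathscr{N}\cong\mathscr{C}(p,q)$.

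\emph{The inclusion $\mathscr{N}\subseteq\mathscr{R}$.} It suffices to show $\mathscr{N}$ is a nil ideal. Given $x\in\mathscr{N}$, write it as a finite combination of basis vectors $e_{I}e_{J}e_{K}$ with $K\neq\emptyset$, and let $m$ be the number of distinct elements of $Z$ occurring among those $K$'s. Expanding $x^{m+1}$ yields a sum of products of $m+1$ such basis vectors. Since each $e_{k}$, $k\in Z$, anticommutes with every $e_{i}$, $i\in P\cup M$, each product can be rewritten, up to sign, as an element of $\mathscr{C}(p,q)$ times $e_{K_{1}}e_{K_{2}}\cdots e_{K_{m+1}}$. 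The $m+1$ non-empty sets $K_{t}$ all lie in an $m$-element set, so two of them meet; bringing the repeated $Z$-generator together gives a factor $e_{\lambda}^{2}=0$. Hence $x^{m+1}=0$, so $\mathscr{N}$ is nil and therefore lies in the sum $\mathscr{R}$ of all nil ideals.

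\emph{The inclusion $\mathscr{R}\subseteq\mathscr{N}$.} Here I would show that $\mathscr{C}/\mathscr{N}\cong\mathscr{C}(p,q)$ has zero nil radical. If $p+q<\infty$, the structure theory recalled above says $\mathscr{C}(p,q)$ is a full matrix algebra over $\mathbb{R}$, $\mathbb{C}$, or $\mathbb{H}$, or a direct sum of two such, hence semisimple; if $p+q=\infty$, it is simple by [7]. In both cases every nonzero two-sided ideal is a (non-trivial) sum of the simple blocks and so contains the identity of a block, an idempotent that is not nilpotent; thus $\mathscr{C}(p,q)$ has no nonzero nil ideal. Consequently, for any nil ideal $\mathscr{J}$ of $\mathscr{C}$ the image $(\mathscr{J}+\mathscr{N})/\mathscr{N}$ is a nil ideal of $\mathscr{C}/\mathscr{N}$, hence zero, i.e. $\mathscr{J}\subseteq\mathscr{N}$; summing over all nil ideals gives $\mathscr{R}\subseteq\mathscr{N}$, and with the previous paragraph, $\mathscr{R}=\mathscr{N}$.

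\emph{Main obstacle.} The verifications in the first two paragraphs are bookkeeping with the basis (2.3); the real point is the last step, namely that $\mathscr{C}(p,q)$ has no nonzero nil ideal. In the finite-dimensional case this is immediate from the Wedderburn-type classification, but when $p+q=\infty$ it relies essentially on the simplicity of $\mathscr{C}(p,q)$ established in [7]. Once that is granted, the two inclusions above close up to give the proposition.
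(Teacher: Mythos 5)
Your proposal is correct and follows essentially the same route as the paper: both rest on the vector-space decomposition $\mathscr{C}=\mathscr{C}(p,q)\oplus\mathscr{N}$ coming from the basis (2.3), the nilness of the ideal $\mathscr{N}$ generated by the $e_{\lambda}$, $\lambda\in Z$, and the fact that $\mathscr{C}(p,q)$ (simple, or a direct sum of two simple ideals) has no nonzero nil ideal, so your passage to the quotient $\mathscr{C}/\mathscr{N}\cong\mathscr{C}(p,q)$ is just the paper's componentwise argument in different clothing. A minor plus on your side: you prove explicitly, via the pigeonhole bound $x^{m+1}=0$, that $\mathscr{N}$ is nil, a point the paper leaves implicit.
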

\begin{proof}
Let $N$ be the ideal generated by $\{e_{\lambda}: \lambda\in Z\}$. Then by (2.3),
\bea
\mathscr{C}=\mathscr{C}(p,q)\oplus N
\eea 
is a direct sum of vector spaces. Since the nil radical $\mathscr{R}$ is the sum of all nil ideals, the ideal $N$ is contained in $\mathscr R$. If $a\in \mathscr{R}$, write $a=a_{1}+a_{2}$ such that $a_{1}\in \mathscr{C}(p,q)$ and $a_{2}\in N$ according to (2.6), then $a_{1}\in \mathscr{R}$. If $a_{1}\ne 0$, then we can get a contradiction right away if the subalgebra $\mathscr{C}(p,q)$ is simple. In the case that $\mathscr{C}(p,q)$ is not simple, $\mathscr{C}(p,q)$ is a full matrix algebra or a direct sum of two copies of a full matrix algebra. Thus by considering the matrices (say, use elementary matrices from both sides), we can see that there is an idempotent in the ideal generated by $a_{1}$, which also leads to a contradiction. So $a_{1}=0$ and $N=\mathscr{R}$.  
\end{proof}
\par
It follows from Proposition 2.1 that the elements $e_{I}e_{J}e_{K}$, where $I\subseteq P, J\subseteq M, K\subseteq Z$ such that $K\ne\emptyset$, form a basis of $\mathscr{R}$. The subalgebra $\mathscr{C}(z)$ has a grading given by
\be
\mathscr{C}(z)=\bigoplus_{i=1}^{\infty}\mathscr{C}(z)_{i},
\ee
where $\mathscr{C}(z)_{i}$ is the linear span of all the $e_{K}$ ($K\subseteq Z$) such that $\mid K\mid=i$. This grading induces a grading on $\mathscr{R}$ given by
\be
\mathscr{R}=\bigoplus_{i=1}^{\infty}\mathscr{R}_{i},\quad \mathscr{R}_{i}=\mathscr{C}(p,q)\mathscr{C}(z)_{i}.
\ee 
\par
\section{The ideal structure of $\mathscr{C}$}
\par
In this section, we describe the ideals of $\mathscr{C}$.
\par
\begin{theorem} Let $I$ be a non-trivial ideal of $\mathscr{C}$. If $\mathscr{C}(p,q)$ is simple, then $I\subseteq\mathscr{R}$. If $\mathscr{C}(p,q)$ is not simple and $I\nsubseteq\mathscr{R}$, then $I$ is generated by $\mathscr{C}_{1}$ or $\mathscr{C}_{2}$ together with $I\cap\mathscr{R}$. In fact, 
\be
I=\mathscr{C}_{1}\oplus(I\cap\mathscr{R})\quad\text{or}\quad I=\mathscr{C}_{2}\oplus(I\cap\mathscr{R})
\ee 
as vector space direct sums.
\end{theorem}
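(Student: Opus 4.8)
The plan is to exploit the canonical projection determined by the radical. By (2.5) and Proposition 2.1 we have a vector-space decomposition $\mathscr{C}=\mathscr{C}(p,q)\oplus\mathscr{R}$ in which $\mathscr{C}(p,q)$ is a subalgebra and $\mathscr{R}$ is an ideal; consequently the projection $\pi\colon\mathscr{C}\to\mathscr{C}(p,q)$ along $\mathscr{R}$ is a surjective algebra homomorphism with $\ker\pi=\mathscr{R}$, so $\pi(I)$ is an ideal of $\mathscr{C}(p,q)$ for any ideal $I$ of $\mathscr{C}$. I would first isolate a compression lemma: if $e\in\mathscr{C}(p,q)$ is an idempotent with $e\in\pi(I)$, then $e\in I$. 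To prove it, choose $a_{0}\in I$ with $\pi(a_{0})=e$ and set $a=ea_{0}e\in I$; then $\pi(a)=e$, and $r:=a-e$ lies in $e\mathscr{C}e\cap\mathscr{R}$. Since $\mathscr{R}$ is nil, $r$ is nilpotent, so $a=e+r$ is a unit in the unital algebra $e\mathscr{C}e$ (its inverse being a finite geometric sum); multiplying $a$ on the right by this inverse, which lies in $\mathscr{C}$, yields $e\in I$.

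With the lemma in hand the first assertion is immediate. If $\mathscr{C}(p,q)$ is simple, then $\pi(I)$ is $0$ or all of $\mathscr{C}(p,q)$; in the latter case $1\in\pi(I)$, so the lemma with $e=1$ gives $1\in I$ and hence $I=\mathscr{C}$, contradicting non-triviality. Therefore $\pi(I)=0$, i.e. $I\subseteq\ker\pi=\mathscr{R}$.

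Next, assume $\mathscr{C}(p,q)=\mathscr{C}_{1}\oplus\mathscr{C}_{2}$ is not simple and $I\nsubseteq\mathscr{R}$. Since the ideals of a direct sum of two simple algebras are exactly $0,\mathscr{C}_{1},\mathscr{C}_{2},\mathscr{C}_{1}\oplus\mathscr{C}_{2}$, and $\pi(I)\ne 0$ while $\pi(I)=\mathscr{C}(p,q)$ is excluded exactly as above, we get $\pi(I)=\mathscr{C}_{i}$ for $i=1$ or $2$; say $i=1$. Let $\epsilon_{1}$ be the identity of $\mathscr{C}_{1}$; it is an idempotent of $\mathscr{C}(p,q)$ lying in $\pi(I)$, so the lemma gives $\epsilon_{1}\in I$, whence $\mathscr{C}_{1}=\mathscr{C}(p,q)\epsilon_{1}\subseteq\mathscr{C}\epsilon_{1}\subseteq I$. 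Finally, for $x\in I$ write $x=\pi(x)+(x-\pi(x))$ with $\pi(x)\in\pi(I)=\mathscr{C}_{1}\subseteq I$; then $x-\pi(x)\in I\cap\mathscr{R}$, so $I=\mathscr{C}_{1}+(I\cap\mathscr{R})$, a sum which is direct because $\mathscr{C}_{1}\cap\mathscr{R}\subseteq\mathscr{C}(p,q)\cap\mathscr{R}=0$. This yields the stated decomposition, with the symmetric one occurring when $i=2$.

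The substantive point, and the only place where care is needed, is the compression lemma: because $\epsilon_{1}$ need not be central in $\mathscr{C}$, one cannot invert $\epsilon_{1}+r$ directly inside $\mathscr{C}$, and it is the passage to the corner $e\mathscr{C}e$ — in which $e\mathscr{R}e$ is a genuine nil ideal — that legitimizes the unit-plus-nilpotent argument. This step, and hence the whole proof, relies on $\mathscr{R}$ being nil even when $p+q$ or $z$ is infinite, which is exactly its defining property as the nil radical.
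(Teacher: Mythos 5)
Your proof is correct, and its computational heart --- compressing by the idempotent $e$ and inverting the unit-plus-nilpotent element $e+r$ inside the corner $e\mathscr{C}e$ by a finite geometric series --- is exactly the paper's key step (there one sets $y=-e_{1}d_{1}e_{1}$, notes $e_{1}y=y=ye_{1}$, and writes $e_{1}=(e_{1}-y)(e_{1}+y+\cdots+y^{m-1})$), so the two arguments share the same engine; as you note, this corner passage is genuinely needed because the idempotent $e_{1}$ is central in $\mathscr{C}(p,q)$ but not in $\mathscr{C}$. What you do differently is the packaging: you observe that the projection $\pi$ along $\mathscr{R}$ is an algebra homomorphism (equivalently, you work in $\mathscr{C}/\mathscr{R}\simeq\mathscr{C}(p,q)$) and classify $\pi(I)$ among the four ideals of $\mathscr{C}_{1}\oplus\mathscr{C}_{2}$, whereas the paper manipulates elements directly: it uses $\mathscr{C}(p,q)b\mathscr{C}(p,q)=\mathscr{C}(p,q)$ to produce $1+x\in I$ in the simple case, multiplies a general $a=b+c+d$ by $e_{1}$, $e_{2}$ in the non-simple case, and eliminates the possibility that both components occur by a somewhat delicate ``without loss of generality'' reduction. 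Your route buys two things: the case analysis becomes automatic (either $\pi(I)=\mathscr{C}(p,q)$, which forces $1\in I$ and contradicts non-triviality, or $\pi(I)=\mathscr{C}_{i}$), and the final equality $I=\mathscr{C}_{i}\oplus(I\cap\mathscr{R})$ is actually derived by writing $x=\pi(x)+(x-\pi(x))$, where the paper only says it ``follows from (2.5)''. The one hypothesis you lean on --- that every element of $\mathscr{R}$ is nilpotent even when $z=\infty$ --- is also used implicitly by the paper and is legitimate here, since any element of $\mathscr{R}$ lies in $I_{S}$ for some finite $S\subseteq Z$ and such an $I_{S}$ is nilpotent.
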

\begin{proof}
If $\mathscr{C}(p,q)$ is simple and $I\nsubseteq\mathscr{R}$, then there exists $a=b+c\in I$ such that $0\ne b\in\mathscr{C}(p,q)$ and $c\in\mathscr{R}$. Since
\be
\mathscr{C}(p,q)b\mathscr{C}(p,q)= \mathscr{C}(p,q),
\ee
from $b+c\in I$ we get $1+x\in I$ for some $x\in\mathscr{R}$. Since $x$ is nilpotent, $1+x$ is invertible and $I=\mathscr{C}$, which is a contradiction.
\par
If $\mathscr{C}(p,q)$ is not simple and $I\nsubseteq\mathscr{R}$, then there exists $a=b+c+d\in I$ with $b\in\mathscr{C}_{1}$, $c\in\mathscr{C}_{2}$, and $d\in \mathscr{R}$, such that at least one of $b$ and $c$ is non-zero. Let $1=e_{1}+e_{2}$ be the idempotent decomposition according to the decomposition of (2.4). Then $e_{1}a=b+e_{1}d\in I$. Thus, if $b\ne 0$, then $e_{1}+d_{1}\in I$ for some $d_{1}\in\mathscr{R}$. Similarly, if $c\ne 0$, then $e_{2}+d_{2}\in I$ for some $d_{2}\in\mathscr{R}$. Thus if both $b$ and $c$ are non-zero (or for some $a$, $b\ne 0$, and for another $a$, $c\ne 0$), we would have
\be
e_{1}+d_{1}+e_{2}+d_{2}=1+d_{1}+d_{2}\in I.
\ee
Now the fact that $d_{1}+d_{2}\in\mathscr{R}$ implying that $1+d_{1}+d_{2}$ is invertible would lead to a contradiction. Hence without loss of generality, we can assume that $b\ne 0$ and $c=0$ for all $a\in I\setminus\mathscr{R}$. Then from $e_{1}+d_{1}\in I$, we have
\be
e_{1}(e_{1}+d_{1})e_{1}=e_{1}+e_{1}d_{1}e_{1}\in I.
\ee
Let $y=-e_{1}d_{1}e_{1}$. Then $y\in\mathscr{R}$, $e_{1}y=y=ye_{1}$, and $e_{1}-y\in I$. Take a positive integer $m$ such that $y^{m}=0$, then
\be
e_{1}=(e_{1}-y)(e_{1}+y+y^{2}+\cdots+y^{m-1})\in I.
\ee
This implies that $\mathscr{C}_{1}\subseteq I$. Now the second statement of the theorem follows from (2.5).
\end{proof}
\par
This theorem reduces the study of the nontrivial ideals of $\mathscr{C}$ to the study of the ones contained in the nil radical $\mathscr{R}$. We shall give a description of the nilpotent ideals of $\mathscr{C}$ towards the end of this section. 
\par
Recall that an ideal $I$ of a noncommutative ring $R$ is said to be prime if $I\ne R$ and for any ideal $A, B \subseteq R$, $AB\subseteq I$ implies $A\subseteq I$ or $B\subseteq I$. The following theorem describes the prime ideals of $\mathscr{C}$.
\begin{theorem} If the subalgebra $\mathscr{C}(p,q)$ is simple, then $\mathscr{C}$ has only one prime ideal, namely $\mathscr{R}$. If the subalgebra $\mathscr{C}(p,q)$ is not simple, then there are two prime ideals:
\be
I=\mathscr{C}_{1}\oplus \mathscr{R}\quad\text{and}\quad I=\mathscr{C}_{2}\oplus \mathscr{R}.
\ee 
\end{theorem}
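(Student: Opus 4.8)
The plan is to reduce Theorem~3.2 to a statement about the quotient $\mathscr{C}/\mathscr{R}$. Since $\mathscr{C}/\mathscr{R}\cong\mathscr{C}(p,q)$ by $(2.5)$, and the ideal structure of $\mathscr{C}(p,q)$ is completely known (Section~2), the prime ideals of $\mathscr{C}$ will be read off from the correspondence theorem as soon as one shows that every prime ideal of $\mathscr{C}$ contains the whole nil radical $\mathscr{R}$. So the argument has two stages: first, $\mathscr{R}\subseteq P$ for every prime ideal $P$; second, the bookkeeping with the correspondence theorem.

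For the first stage I would prove the identity $e_{\lambda}\mathscr{C}e_{\lambda}=0$ for each $\lambda\in Z$. This is a one-line check on the basis $(2.3)$: if a basis monomial $m$ does not involve $e_{\lambda}$, then $e_{\lambda}$ anticommutes with every factor of $m$, so $e_{\lambda}me_{\lambda}=\pm e_{\lambda}^{2}m=0$; and if $m$ does involve $e_{\lambda}$, then $m=\pm e_{\lambda}m'$ with $m'$ free of $e_{\lambda}$, so $e_{\lambda}me_{\lambda}=\pm e_{\lambda}^{2}m'e_{\lambda}=0$. Hence the principal ideal $J_{\lambda}=\mathscr{C}e_{\lambda}\mathscr{C}$ satisfies $J_{\lambda}^{2}=\mathscr{C}e_{\lambda}\mathscr{C}e_{\lambda}\mathscr{C}=\mathscr{C}(e_{\lambda}\mathscr{C}e_{\lambda})\mathscr{C}=0$. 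Therefore, for any prime ideal $P$, the inclusion $J_{\lambda}J_{\lambda}=0\subseteq P$ forces $J_{\lambda}\subseteq P$, i.e. $e_{\lambda}\in P$. Since by Proposition~2.1 the elements $e_{\lambda}$ ($\lambda\in Z$) generate $\mathscr{R}$, we get $\mathscr{R}\subseteq P$.

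Given this, the prime ideals of $\mathscr{C}$ are exactly the preimages under the quotient map $\pi\colon\mathscr{C}\to\mathscr{C}/\mathscr{R}\cong\mathscr{C}(p,q)$ of the prime ideals of $\mathscr{C}(p,q)$, and by $(2.5)$ the preimage of an ideal $A\subseteq\mathscr{C}(p,q)$ is $A\oplus\mathscr{R}$ as a vector space. If $\mathscr{C}(p,q)$ is simple it is a prime ring whose only proper ideal is $0$, so $\mathscr{R}=0\oplus\mathscr{R}$ is the unique prime ideal of $\mathscr{C}$. If $\mathscr{C}(p,q)=\mathscr{C}_{1}\oplus\mathscr{C}_{2}$ is not simple, its only ideals are $0,\mathscr{C}_{1},\mathscr{C}_{2},\mathscr{C}_{1}\oplus\mathscr{C}_{2}$; here $0$ is not prime because $\mathscr{C}_{1}\mathscr{C}_{2}=0$ with both factors nonzero, the improper ideal is excluded by definition, while each of $\mathscr{C}_{1}$ and $\mathscr{C}_{2}$ is prime since the quotients $\mathscr{C}(p,q)/\mathscr{C}_{1}\cong\mathscr{C}_{2}$ and $\mathscr{C}(p,q)/\mathscr{C}_{2}\cong\mathscr{C}_{1}$ are simple. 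Pulling these two back through $\pi$ gives precisely $\mathscr{C}_{1}\oplus\mathscr{R}$ and $\mathscr{C}_{2}\oplus\mathscr{R}$.

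Essentially all the content lies in the first stage, and its delicate point is that a prime ideal must absorb all of $\mathscr{R}$ even when $z=\infty$, in which case $\mathscr{R}$ itself is not nilpotent; the device of passing to the principal ideals $J_{\lambda}$, each of which squares to zero because of $e_{\lambda}\mathscr{C}e_{\lambda}=0$, is exactly what avoids any appeal to nilpotency of $\mathscr{R}$. After that, the classification is just the correspondence theorem combined with the structure of $\mathscr{C}(p,q)$ recalled in Section~2.
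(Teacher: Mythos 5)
Your proof is correct and follows essentially the same route as the paper: show that every prime ideal must contain $\mathscr{R}$, then read off the prime ideals from the structure of $\mathscr{C}(p,q)\cong\mathscr{C}/\mathscr{R}$ (simple, or a sum of two simple ideals with $\mathscr{C}_{1}\mathscr{C}_{2}=0$). Your verification that $e_{\lambda}\mathscr{C}e_{\lambda}=0$, so that the principal ideal $\mathscr{C}e_{\lambda}\mathscr{C}$ squares to zero, merely supplies the detail behind the paper's terse claim that $e_{i}^{2}=0$ for $i\in Z$ forces $I\supseteq\mathscr{R}$.
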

\begin{proof}
Let $I$ be a prime ideal of $\mathscr{C}$. Since $e_{i}^{2}=0$ for $i\in Z$, $I\supseteq\mathscr{R}$. If $\mathscr{C}(p,q)$ is simple, then $\mathscr{R}$ is a prime ideal. If $\mathscr{C}(p,q)=\mathscr{C}_{1}\oplus\mathscr{C}_{2}$, then $\mathscr{R}$ is not prime since $\mathscr{C}_{1}\mathscr{C}_{2}=(0)$, and in this case, $\mathscr{C}_{i}\oplus\mathscr{R}$, $i=1,2$, are the two prime ideals.
\end{proof}
\par
Recall that in a ring $R$, we have the implications: maximal ideal $\Longrightarrow$ primitive ideal $\Longrightarrow$ prime ideal (see Ch. 3 of [3]). Theorem 3.2 immediately implies the following corollary:
\begin{corollary} In the algebra $\mathscr{C}$, maximal ideal $=$ primitive ideal $=$ prime ideal.
\end{corollary}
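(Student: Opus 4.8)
The plan is to establish the two implications missing from the universal chain recalled just before the corollary, namely that in $\mathscr{C}$ every prime ideal is maximal; once this is known, "maximal $\Rightarrow$ primitive $\Rightarrow$ prime $\Rightarrow$ maximal" forces the three classes of ideals to coincide, and the corollary follows. Since Theorem 3.2 already lists all prime ideals of $\mathscr{C}$ explicitly, the only thing to do is to check that each of them is maximal.

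First I would reduce everything to the quotient by $\mathscr{R}$. As observed in the proof of Theorem 3.2, every prime ideal of $\mathscr{C}$ contains $\mathscr{R}$ (because each $e_{\lambda}$ with $\lambda\in Z$ squares to zero), and of course every maximal ideal does too; hence, by the correspondence theorem, the prime (resp. maximal) ideals of $\mathscr{C}$ are exactly the preimages of the prime (resp. maximal) ideals of $\mathscr{C}/\mathscr{R}$. The decomposition $\mathscr{C}=\mathscr{C}(p,q)\oplus\mathscr{R}$ of (2.5), in which $\mathscr{C}(p,q)$ is a subalgebra and $\mathscr{R}$ an ideal meeting it trivially, is an algebra-level splitting, so $\mathscr{C}/\mathscr{R}\cong\mathscr{C}(p,q)$ as algebras. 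Thus it suffices to show that every prime ideal of $\mathscr{C}(p,q)$ is maximal.

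This last point is then immediate from the structure theory quoted in Section 2. If $p+q=\infty$, or $p+q<\infty$ with $p-q\not\equiv 1,5\pmod 8$, then $\mathscr{C}(p,q)$ is simple, so its only proper ideal --- a fortiori its only prime and its only maximal ideal --- is $(0)$; pulling back, $\mathscr{R}$ is simultaneously the unique prime and the unique maximal ideal of $\mathscr{C}$, matching Theorem 3.2. If instead $\mathscr{C}(p,q)=\mathscr{C}_{1}\oplus\mathscr{C}_{2}$ with $\mathscr{C}_{1},\mathscr{C}_{2}$ simple, then the ideals of $\mathscr{C}(p,q)$ are $(0),\mathscr{C}_{1},\mathscr{C}_{2},\mathscr{C}(p,q)$; here $(0)$ is not prime (since $\mathscr{C}_{1}\mathscr{C}_{2}=(0)$ while neither factor lies in $(0)$), whereas $\mathscr{C}_{1}$ and $\mathscr{C}_{2}$ are prime and, having simple quotients $\mathscr{C}_{2}$ and $\mathscr{C}_{1}$ respectively, are also maximal. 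Pulling back gives that $\mathscr{C}_{1}\oplus\mathscr{R}$ and $\mathscr{C}_{2}\oplus\mathscr{R}$ are precisely both the prime and the maximal ideals of $\mathscr{C}$, again matching Theorem 3.2. In every case prime $=$ maximal in $\mathscr{C}$, which completes the argument.

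I do not anticipate a genuine obstacle: the whole proof rides on Theorem 3.2 together with the elementary ideal theory of simple rings and of direct sums of two simple rings. The only mild points requiring care are that the displayed vector-space direct sums are honest algebra decompositions (so the quotient identifications are ring isomorphisms, which is already how Theorems 3.1 and 3.2 were applied), and that $(0)$ fails to be a prime ideal of $\mathscr{C}$ since $\mathscr{C}$ always has zero divisors.
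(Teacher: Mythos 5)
Your proof is correct and is essentially the paper's own argument (which is left implicit as ``Theorem 3.2 immediately implies the corollary''): combine the universal chain maximal $\Rightarrow$ primitive $\Rightarrow$ prime with the explicit list of prime ideals in Theorem 3.2, each of which is maximal because the corresponding quotient is simple ($\mathscr{C}/\mathscr{R}\cong\mathscr{C}(p,q)$ in the simple case, $\mathscr{C}/(\mathscr{C}_{i}\oplus\mathscr{R})\cong\mathscr{C}_{j}$ otherwise), so the three classes coincide. One caveat: delete your closing remark that $(0)$ is never a prime ideal of $\mathscr{C}$ because $\mathscr{C}$ has zero divisors --- in a noncommutative ring zero divisors do not prevent $(0)$ from being prime (full matrix algebras are prime), and in fact $(0)=\mathscr{R}$ \emph{is} prime (and maximal) when $z=0$ and $\mathscr{C}(p,q)$ is simple; fortunately this aside is never used, since your reduction modulo $\mathscr{R}$ already handles that case correctly.
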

\par
By using the descriptions of the Jacobson radical of a ring on p. 196 of [4], we also obtain the following corollary:
\begin{corollary} The Jacobson radical $J(\mathscr{C})$ is equal to $\mathscr{R}$.
\end{corollary}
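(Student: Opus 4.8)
The plan is to compute $J(\mathscr{C})$ as the intersection of the primitive ideals of $\mathscr{C}$ and then read that intersection off from Theorem~3.2. Among the descriptions of the Jacobson radical on p.~196 of [4] is the fact that, in a ring with $1$, $J$ equals the intersection of all (left) primitive ideals, equivalently the intersection of all maximal left ideals; and every maximal left ideal contains a two‑sided primitive ideal. By Corollary~3.1 the two‑sided primitive ideals of $\mathscr{C}$ are exactly the prime ideals described in Theorem~3.2, so once I know \emph{which} of those prime ideals actually occur as primitive ideals, intersecting them finishes the job.

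First I would dispose of the case where $\mathscr{C}(p,q)$ is simple. Here Theorem~3.2 says $\mathscr{R}$ is the \emph{only} prime ideal, hence the only candidate for a primitive ideal. Since $\mathscr{C}$ has a maximal left ideal by Zorn's lemma, it has a nonzero simple left module and therefore at least one proper primitive ideal; by the classification that ideal must be $\mathscr{R}$, so $J(\mathscr{C})=\mathscr{R}$.

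The delicate case is $\mathscr{C}(p,q)=\mathscr{C}_1\oplus\mathscr{C}_2$, where a priori only one of the two prime ideals $\mathscr{C}_1\oplus\mathscr{R}$, $\mathscr{C}_2\oplus\mathscr{R}$ might be primitive, which would make $J(\mathscr{C})$ too big. I would show both are primitive: $\mathscr{C}_1$ is a simple ring with identity $e_1$ (a full matrix algebra over $\mathbb{R}$ or $\mathbb{H}$), hence has a faithful simple left module $S$; pulling $S$ back along the surjective algebra homomorphism $\mathscr{C}\to\mathscr{C}(p,q)\cong\mathscr{C}/\mathscr{R}\to\mathscr{C}_1$ (projection modulo $\mathscr{R}$ followed by projection modulo $\mathscr{C}_2$) makes $S$ a simple $\mathscr{C}$‑module whose annihilator is precisely the kernel $\mathscr{C}_2\oplus\mathscr{R}$; symmetrically $\mathscr{C}_1\oplus\mathscr{R}$ is primitive. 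Therefore $J(\mathscr{C})=(\mathscr{C}_1\oplus\mathscr{R})\cap(\mathscr{C}_2\oplus\mathscr{R})$, and since the vector‑space sum $\mathscr{C}_1\oplus\mathscr{C}_2\oplus\mathscr{R}$ is direct (combine (2.4) and (2.5)), any element in both ideals has equal $\mathscr{C}_1$‑ and $\mathscr{C}_2$‑components, forcing both to be zero; the intersection is $\mathscr{R}$.

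The main obstacle is exactly this bookkeeping in the non‑simple case — verifying that we intersect over \emph{both} prime ideals, not just one. A cleaner alternative that bypasses primitive ideals entirely (and also only uses "descriptions of the Jacobson radical" plus Proposition~2.1) is the two‑inclusion argument: $\mathscr{R}$ is a nil ideal — indeed it is the sum of all nil ideals and a sum of nil ideals is nil — hence quasi‑regular, hence $\mathscr{R}\subseteq J(\mathscr{C})$; conversely $\mathscr{C}/\mathscr{R}\cong\mathscr{C}(p,q)$ is either a simple ring with $1$ or a finite direct sum of such, so $J(\mathscr{C}/\mathscr{R})=0$, and since the image of $J(\mathscr{C})$ in any quotient lies in the quotient's radical, $J(\mathscr{C})\subseteq\mathscr{R}$. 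Either route yields $J(\mathscr{C})=\mathscr{R}$.
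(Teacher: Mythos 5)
Your proposal is correct and follows essentially the route the paper intends: the paper proves the corollary simply by invoking the descriptions of the Jacobson radical on p.~196 of [4] together with Theorem 3.2 and Corollary 3.1, i.e.\ $J(\mathscr{C})$ is the intersection of the primitive ideals, which by the classification is $\mathscr{R}$ when $\mathscr{C}(p,q)$ is simple and $(\mathscr{C}_{1}\oplus\mathscr{R})\cap(\mathscr{C}_{2}\oplus\mathscr{R})=\mathscr{R}$ otherwise. Your explicit check that both prime ideals are primitive (which Corollary 3.1 already guarantees) and your alternative two-inclusion argument via nilness of $\mathscr{R}$ and semisimplicity of $\mathscr{C}/\mathscr{R}\cong\mathscr{C}(p,q)$ are both sound ways of filling in details the paper leaves to the reader.
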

\par
Although the fact that there exists an infinite set of pair-wise orthogonal idempotents in $\mathscr{C}$ when $p+q=\infty$ implies that $\mathscr{C}$ is not left Noetherian (hence not left Artinian), we do have the following theorem (note that the proof also shows that if $z=\infty$, then $\mathscr{C}$ is not left Noetherian even if $p+q<\infty$).
\begin{theorem} The algebra $\mathscr{C}$ is Artinian (Noetherian) if and only if $p+q<\infty$ and $z<\infty$.
\end{theorem}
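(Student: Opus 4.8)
The plan is to prove the two implications separately. The ``if'' direction is immediate: if $p+q<\infty$ and $z<\infty$ then $\Lambda=M\cup P\cup Z$ is finite, so the basis $\{e_Ie_Je_K:\ I\subseteq P,\ J\subseteq M,\ K\subseteq Z\}$ of (2.3) is finite, $\mathscr{C}$ is a finite dimensional $\mathbb{R}$-algebra, and hence is both Artinian and Noetherian. For the ``only if'' direction I argue the contrapositive in the two cases $p+q=\infty$ and $z=\infty$, producing in each an explicit strictly ascending chain of left ideals together with a strictly descending chain of two-sided ideals, so that neither chain condition can hold.

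Suppose first $p+q=\infty$. I use the fact recorded in Section~2 (and proved in [8]) that $\mathscr{C}(p,q)$ contains an infinite family $g_1,g_2,\dots$ of pairwise orthogonal nonzero idempotents; since $\mathscr{C}(p,q)$ is a unital subalgebra, these are still pairwise orthogonal nonzero idempotents in $\mathscr{C}$. (If a self-contained argument is wanted one can take $g_1=h_1$ and $g_{k+1}=(1-h_1)\cdots(1-h_k)h_{k+1}$, where $h_k=\tfrac12(1-u_k)$ and $u_k=e_{4k-3}e_{4k-2}e_{4k-1}e_{4k}$ — or the analogous product of four generators with indices in $M$, in case $p<\infty$ — a short computation giving $u_k^2=1$ and $u_ju_k=u_ku_j$.) Put $f_n=g_1+\cdots+g_n$, which is an idempotent. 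Since $f_ng_{n+1}=0$ while $f_{n+1}g_{n+1}=g_{n+1}\ne0$, right multiplication by $g_{n+1}$ annihilates $\mathscr{C}f_n$ but not $f_{n+1}$, so $f_{n+1}\notin\mathscr{C}f_n$ and $\mathscr{C}f_1\subsetneq\mathscr{C}f_2\subsetneq\cdots$; likewise $(1-f_{n+1})g_{n+1}=0$ while $(1-f_n)g_{n+1}=g_{n+1}\ne0$, giving $1-f_n\notin\mathscr{C}(1-f_{n+1})$ and $\mathscr{C}(1-f_1)\supsetneq\mathscr{C}(1-f_2)\supsetneq\cdots$.

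Now suppose $z=\infty$. Here I use the grading $\mathscr{R}=\bigoplus_{i\ge1}\mathscr{R}_i$ from Section~2. A short computation with the defining relations shows $\mathscr{R}_i\mathscr{R}_j\subseteq\mathscr{R}_{i+j}$ and $\mathscr{R}_n\subseteq\mathscr{R}^n$, hence $\mathscr{R}^n=\bigoplus_{i\ge n}\mathscr{R}_i$; since $z=\infty$, every $\mathscr{R}_n$ is nonzero, so $\mathscr{R}\supsetneq\mathscr{R}^2\supsetneq\cdots$ is a strictly descending chain of two-sided ideals and $\mathscr{C}$ is not Artinian. For the Noetherian failure, fix distinct $k_1<k_2<\cdots$ in $Z$ and set $L_n=\mathscr{C}e_{k_1}+\cdots+\mathscr{C}e_{k_n}$. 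Because $e_Ke_{k_j}$ equals $0$ when $k_j\in K$ and $\pm e_{K\cup\{k_j\}}$ otherwise, every element of $\mathscr{C}e_{k_j}$ is a linear combination of basis monomials $e_Ie_Je_K$ with $k_j\in K$; hence $e_{k_{n+1}}\notin L_n$ and $L_1\subsetneq L_2\subsetneq\cdots$.

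Everything here is elementary and there is no single deep obstacle. The steps that require a little care are the verifications that the chains are strict — each reduces to a one-line comparison of a distinguished element ($f_{n+1}$, a nonzero graded component $\mathscr{R}_n$, or $e_{k_{n+1}}$) with the basis (2.3) — and, when $p+q=\infty$, having the infinite family of pairwise orthogonal idempotents in hand, which is the one genuinely non-elementary input and is already supplied by [8].
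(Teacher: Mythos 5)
Your proof is correct, but it takes a genuinely different route from the paper's in both directions. For the ``if'' direction the paper does not invoke finite dimensionality: it argues structurally, first noting that $\mathscr{C}/\mathscr{R}$ is Artinian because $\mathscr{C}(p,q)$ is simple or a sum of two simple ideals, and then proving that $\mathscr{R}$ is Artinian by induction on $z$, using the decomposition $(e_{0})=\mathscr{R}_{0}e_{0}\oplus\mathscr{C}(p,q)e_{0}$ and the isomorphism $\mathscr{R}/(e_{0})\simeq\mathscr{R}_{0}$; your one-line observation that $p+q<\infty$ and $z<\infty$ force $\dim_{\mathbb{R}}\mathscr{C}=2^{p+q+z}<\infty$ is simpler and equally valid, while the paper's induction yields some structural information about $\mathscr{R}$ that the bare statement does not require. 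For the ``only if'' direction with $z=\infty$, the paper exhibits the descending chain $(e_{0}e_{1}\cdots e_{i})$ and the ascending chain generated by $\{e_{0},\ldots,e_{i}\}$, whereas you use the radical powers $\mathscr{R}^{n}=\bigoplus_{i\ge n}\mathscr{R}_{i}$ and the left ideals $L_{n}$; same spirit, different chains, both correct. Where you genuinely add something is the case $p+q=\infty$: the paper disposes of it in the remark preceding the theorem by citing [8] for the infinite family of pairwise orthogonal idempotents and then using ``not left Noetherian, hence not left Artinian,'' which rests on Hopkins--Levitzki, while you construct the idempotents explicitly from the commuting involutions $u_{k}$ and exhibit both a strictly ascending and a strictly descending chain of left ideals, making this case self-contained and avoiding Hopkins--Levitzki altogether. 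Only minor polish is needed: the indices $4k-3,\ldots,4k$ should be read as four distinct indices chosen from $P$ (or from $M$ if $p<\infty$), and you should record the trivial identities $f_{n}=f_{n}f_{n+1}$ and $(1-f_{n+1})(1-f_{n})=1-f_{n+1}$ so that your chains are in fact nested before you verify strictness.
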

\begin{proof} 
If $z=\infty$, we can assume that $Z=\mathbb{Z}_{+}$ and construct an infinite descending chain of ideals by letting $I_{i}=(e_{0}e_{1}\cdots e_{i})$ ($i\in \mathbb{Z}_{+}$). Thus in this case, the algebra is not Artinian. It is also not Noetherian since the ascending chain of ideals $I_{i}$ generated by $\{e_{0},\ldots,e_{i}\}$ ($i\in \mathbb{Z}_{+}$) does not terminate.
\par
Suppose that $z$ is finite. There is nothing to prove if $z=0$, so we assume that $z>0$. We prove the statement that $\mathscr{C}$ is Artinian, the proof for Noetherian is similar. Since $\mathscr{C}(p,q)$ is either simple or a direct sum of two simple ideals, $\mathscr{C}/\mathscr{R}$ is Artinian. So to prove that $\mathscr{C}$ is Artinian, we only need to prove that $\mathscr{R}$ is Artinian. We use induction on $z$. If $z=1$, then $\mathscr{R}=\mathscr{C}(p,q)e_{0}$, where $e_{0}$ is the only generator of $\mathscr{C}(z)$, is a simple ideal or is a direct sum of two simple ideals
\be
\mathscr{R}=\mathscr{C}_{1}e_{0}\oplus\mathscr{C}_{2}e_{0},
\ee 
and hence is Artinian. Suppose $\mathscr{C}$ is Artinian for $z=k\ge 1$, consider the case $z=k+1$. Let $Z=\{0, 1,\ldots, k\}$. We claim that the ideal $(e_{0})$ generated by $e_{0}$ is Artinian.
\par
In fact, the ideal $\mathscr{R}_{0}=(e_{1},\ldots, e_{k})$ is Artinian by assumption, and 
\be
(e_{0})=\mathscr{R}_{0}e_{0}\oplus\mathscr{C}(p,q)e_{0},
\ee
so $(e_{0})$ is Artinian since both $\mathscr{R}_{0}e_{0}$ and $(e_{0})/\mathscr{R}_{0}e_{0}$ are. Now 
\be
\mathscr{R}/(e_{0})\simeq\mathscr{R}_{0},
\ee
implies that $\mathscr{R}$ is Artinian.
\end{proof}
\par
We now describe the nilpotent ideals of $\mathscr{C}$. For a basis element $e_{I}e_{J}e_{K}$ described in (2.3), let 
\be
N(e_{I}e_{J}e_{K})=\{e_{k}:k\in K\}.
\ee
In general, for
\be
u=\sum_{IJK} c_{IJK}e_{I}e_{J}e_{K}\in\mathscr{C},
\ee
let
\be
N(u)=\bigcup_{IJK} N(e_{I}e_{J}e_{K}).
\ee 
That is, $N(u)$ is the set of the $e_{\lambda}$ ($\lambda\in Z$) that occur in the expression of $u$ via the basis described by (2.3). For a subset $S\subseteq Z$, let $I_{S}$ be the ideal of $\mathscr{C}$ generated by $\{e_{\lambda}:\lambda\in S\}$.
\begin{theorem}
Let $I\subseteq\mathscr{R}$ be an ideal of $\mathscr{C}$. Then $I$ is nilpotent if and only if there exists a finite subset $S\subseteq Z$ such that $I\subseteq I_{S}$.
\end{theorem}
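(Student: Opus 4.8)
\emph{Backward implication.} The plan is to show that $I_S$ itself is nilpotent whenever $S$ is finite, and hence so is every ideal inside it. Write $n=|S|$ and $I_S=\sum_{\lambda\in S}\mathscr{C}e_\lambda\mathscr{C}$. Then $I_S^{\,n+1}$ is spanned by products $\mathscr{C}e_{\mu_1}\mathscr{C}e_{\mu_2}\cdots\mathscr{C}e_{\mu_{n+1}}\mathscr{C}$ with all $\mu_j\in S$; by pigeonhole two of the $\mu_j$ coincide, and if $\mu_a=\mu_b$ with $a<b$ and $b-a$ minimal then $\mu_c\ne\mu_a$ for $a<c<b$. Using the identity $e_\mu x=\sigma_\mu(x)\,e_\mu$ (where $\sigma_\mu$ is the linear map killing the basis vectors $e_Ie_Je_K$ with $\mu\in K$ and negating each of the others according to the parity of $|I|+|J|+|K|$), together with $e_\mu e_\nu=-e_\nu e_\mu$ for $\mu\ne\nu$ from $(2.1)$, I can slide the factor $e_{\mu_a}$ to the right through everything until it meets $e_{\mu_b}$, at which point the term vanishes because $e_{\mu_a}^2=0$. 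Thus $I_S^{\,n+1}=0$, so $I^{\,n+1}=0$ and $I$ is nilpotent.

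\emph{Forward implication.} I would argue the contrapositive: assuming $I\nsubseteq I_S$ for \emph{every} finite $S\subseteq Z$, I will produce, for each $m$, a nonzero product of $m$ elements of $I$. Using the basis $(2.3)$ write each $u\in\mathscr{C}$ as $u=\sum_L u^{(L)}$ with $u^{(L)}\in\mathscr{C}(p,q)e_L$, $L$ ranging over finite subsets of $Z$, and put $\operatorname{supp}_Z(u)=\{L:u^{(L)}\ne0\}$; one checks easily that $I_S=\bigoplus_{L\cap S\ne\emptyset}\mathscr{C}(p,q)e_L$, so $I\subseteq I_S$ iff every member of $\mathcal L(I):=\bigcup_{u\in I}\operatorname{supp}_Z(u)$ meets $S$. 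Hence the hypothesis says $\mathcal L(I)$ has no finite transversal, and by the elementary fact that such a family must then contain an infinite pairwise disjoint subfamily, there are $u_1,u_2,\dots\in I$ with pairwise disjoint nonempty $L_i\in\operatorname{supp}_Z(u_i)$. If $\mathscr{C}(p,q)$ is not simple I would first reduce to the simple case: by $(2.4)$ the central idempotents give $\mathscr{C}=e_1\mathscr{C}e_1\oplus e_2\mathscr{C}e_2$ (the cross blocks vanish since $\mathscr{C}_1\mathscr{C}_2=0$), $I$ splits along this decomposition, each $e_i\mathscr{C}e_i$ has the same structure with $\mathscr{C}_i$ (simple) in the role of the scalar part, and $I\subseteq I_S$ is equivalent to the two analogous inclusions. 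So assume $\mathscr{C}(p,q)$ simple. Now I normalize the $u_i$: replacing $u_i$ by $e_{N(u_i)\setminus L_i}u_i\in I$ destroys every term whose $Z$-support is not contained in $L_i$ (the surviving supports are exactly the sets $(N(u_i)\setminus L_i)\cup L$ with $L\subseteq L_i$ and $L\in\operatorname{supp}_Z(u_i)$), leaving an element of $I$ with a unique maximal $Z$-support $N_i$ and nonzero $\mathscr{C}(p,q)$-coefficient there; then, since a nonzero element of a simple algebra generates it as a two-sided ideal, adding finitely many conjugates makes that leading coefficient equal to $1$. For normalized elements $v_1,\dots,v_m$ with \emph{pairwise disjoint} maximal $Z$-supports $N_1,\dots,N_m$, a size count shows the only way to write $N_1\cup\cdots\cup N_m=M_1\sqcup\cdots\sqcup M_m$ with $M_i\in\operatorname{supp}_Z(v_i)$ is $M_i=N_i$, so the component of $v_1\cdots v_m$ in $\mathscr{C}(p,q)e_{N_1\cup\cdots\cup N_m}$ is exactly $\pm e_{N_1}\cdots e_{N_m}=\pm e_{N_1\cup\cdots\cup N_m}\ne0$; hence $v_1\cdots v_m\ne0$, so $I^m\ne0$ for all $m$.

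\emph{Main obstacle.} The real work lies in controlling cancellation, and specifically in achieving the hypothesis used in the last step: the normalized elements must have pairwise disjoint \emph{full} $Z$-supports $N(v_i)$, not merely the distinguished sets $L_i$ extracted at the start. So the crux is to show that, under the standing assumption, the family $\{N(u):u\in I\setminus\{0\}\}$ also has no finite transversal --- equivalently, that one may choose the $u_i\in I$ with $N(u_i)$ already pairwise disjoint --- since my normalization step cannot ``trim'' an element of $I$ down to a sub-support. This, together with the zero-divisor behaviour of $\mathscr{C}(p,q)$ (handled above by the passage to a simple scalar part and the normalization of the leading coefficient to $1$), is where the care is needed; the remaining manipulations are routine bookkeeping with the basis $(2.3)$ and the grading of $\mathscr{R}$.
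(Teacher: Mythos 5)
Your backward implication is fine (and more detailed than the paper's one-line assertion that $I_S$ is nilpotent for finite $S$). In the forward implication, however, the gap you flag yourself is the whole content of the theorem, not a residual technicality. From $I\nsubseteq I_S$ for every finite $S$ you only obtain, for each finite $S$, an element of $I$ having \emph{some} component with $Z$-support disjoint from $S$; this produces the pairwise disjoint distinguished sets $L_i$, but controls nothing about the rest of $N(u_i)$, and your concluding computation (the component of $v_1\cdots v_m$ at $N_1\cup\cdots\cup N_m$ being $\pm e_{N_1\cup\cdots\cup N_m}$) genuinely requires the \emph{full} supports to be pairwise disjoint -- otherwise the leading monomials already multiply to zero and the lower components can conspire to kill the product. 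The statement you defer to (that $\{N(u):0\ne u\in I\}$ has no finite transversal) is not a formal consequence of the hypothesis, since the hypothesis constrains only one component per element, and you give no argument for it; so the forward direction is not proved. For comparison, the paper runs the same greedy construction but picks each new term with $Z$-support disjoint from the full supports of all previously chosen elements and then asserts $u_1\cdots u_t\ne 0$ when $\mathscr{C}(p,q)$ is simple; your normalization (multiplying by $e_{N(u_i)\setminus L_i}$ and using simplicity to make the top coefficient $1$) is a sensible device for controlling coefficient cancellation at that step, but without the disjointness input it does not close the argument.

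There is also an outright error in your reduction for the non-simple case. The central idempotents $e_1,e_2$ of $\mathscr{C}(p,q)$ are not central in $\mathscr{C}$: when $\mathscr{C}(p,q)$ is not simple, $p+q$ is odd, so the volume element $\omega$ (with $e_{1,2}=(1\pm\omega)/2$) anticommutes with every $e_\lambda$, $\lambda\in Z$, giving $e_\lambda e_1=e_2e_\lambda$. Hence the cross blocks do not vanish (for instance $e_1e_\lambda e_2=e_1e_\lambda\ne 0$, while $e_1e_\lambda e_1=0$), every basis element of odd $Z$-degree lies entirely in $e_1\mathscr{C}e_2\oplus e_2\mathscr{C}e_1$, and $\mathscr{C}\ne e_1\mathscr{C}e_1\oplus e_2\mathscr{C}e_2$; ideals do not split as you claim. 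The paper instead stays in $\mathscr{C}$ and passes to a subsequence of the chosen elements whose selected terms all project nontrivially to the same simple summand $\mathscr{C}_j$; any repair of your non-simple case will need something of that kind (and must still account for the fact that multiplication by an odd number of the $e_\lambda$'s interchanges the two summands), in addition to resolving the main gap above.
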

\begin{proof}
We can assume that $z=\infty$, since the proof is only needed in this case. Since $I_{S}$ is nilpotent if $S$ is finite, the condition is sufficient. Assume that $I$ is a nilpotent ideal of $\mathscr{C}$. We claim that if $I\not\subseteq I_{S}$ for any finite $S\subseteq Z$, then we can  construct a sequence of elements $(u_{1},u_{2},\ldots)$ in $I$ such that the product $u_{1}u_{2}\cdots u_{t}\ne 0$ for any $t\ge 1$, and thus obtain a contradiction. 
\par
Start with $u_{1}\ne 0$ in $I$, set $a_{1}$ to be a nonzero term in the expression of $u_{1}$ via the basis of (2.3). Since $N(u_{1})$ is finite, $I\not\subseteq (N(u_{1}))$. Thus we can choose $u_{2}\ne 0$ in $I$, such that if 
\be
u_{2}=\bigoplus_{IJK} c_{IJK}e_{I}e_{J}e_{K},
\ee 
then there exists a nonzero term 
\be
a_{2}=c_{IJK}e_{I}e_{J}e_{K}
\ee
with $N(e_{K})\cap N(u_{1})=\emptyset$. Now since $N(u_{1})\cup N(u_{2})$ is finite, we can choose $u_{3}\ne 0$ in $I$ such that there exists a term 
\be
a_{3}=d_{IJK}e_{I}e_{J}e_{K}
\ee 
in the expression of 
\be
u_{3}=\bigoplus_{IJK} d_{IJK}e_{I}e_{J}e_{K}
\ee 
with 
\be
N(u_{3})\cap(N(u_{1})\cup N(u_{2}))=\emptyset.
\ee 
Continuing this process, we get an infinite sequence of nonzero elements $(u_{1}, u_{2}, \ldots)$ in $I$. Note that if $\mathscr{C}(p,q)$ is simple, then $u_{1}u_{2}\cdots u_{t}\ne 0$ for any $t\ge 1$, and we are done.  If $\mathscr{C}(p,q)$ is not simple, a product $u_{1}u_{2}\cdots u_{t}$ can be zero since for two terms 
\be
e_{I_{i}}e_{J_{i}}e_{K_{i}}, \quad i=1,2, 
\ee
the $\mathscr{C}(p,q)$ parts $e_{I_{i}}e_{J_{i}}, i=1,2,$ may belong to different simple ideals of $\mathscr{C}(p,q)$ (see (2.4)). Thus if $\mathscr{C}(p,q)$ is not simple, we need to further choose a subsequence $(v_{1},v_{2},\ldots)$ of $(u_{1}, u_{2}, \ldots)$ such that the projections of the $\mathscr{C}(p,q)$ parts of the corresponding $a_{i}$ (see above) to one of the simple ideals of $\mathscr{C}(p,q)$ are all nonzero to obtain the required sequence. This is possible since at least one of the projections of each of the $a_{i}$ to the two simple ideals of $\mathscr{C}(p,q)$ must be nonzero.
\end{proof}
\par
Since $I_{S}$ is Noetherian for any finite subset $S\subseteq Z$ by Theorem 3.3, we have the following corollary:
\begin{corollary}
Every nilpotent ideal of $\mathscr{C}$ is finitely generated.
\end{corollary}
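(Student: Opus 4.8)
The strategy is to combine Theorems 3.3 and 3.4. By Theorem 3.4 a nilpotent ideal $I$ of $\mathscr C$ is contained in $I_S$ for some \emph{finite} $S\subseteq Z$, so it suffices to show that, for every finite $S$, each ideal of $\mathscr C$ contained in $I_S$ is finitely generated --- equivalently, that the ideals of $\mathscr C$ lying below the nilpotent ideal $I_S$ satisfy the ascending chain condition. Note that $I_S$ is itself finitely generated, by its $|S|$ defining null generators; the content is the behaviour of its subideals.

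When $p+q<\infty$ and $z<\infty$ there is nothing to do: $\mathscr C$ is Noetherian by Theorem 3.3, so every ideal, and in particular $I$, is finitely generated. So the work is when $z=\infty$ (and, more easily, when $z<\infty$ but $p+q=\infty$, where $\mathscr C$ is still not Noetherian). Here I would try to induct on $|S|$, peeling off one null generator $e_{\mu}$ at a time in the spirit of the inductive argument in the proof of Theorem 3.3: use that $e_{\mu}$ is central up to sign so that $\mathscr C e_{\mu}=I_{\{\mu\}}$ is a two-sided ideal with $\mathscr C/(e_{\mu})\cong\mathscr C(p,q,z-1)$, and recover an ideal $J\subseteq I_S$ from a finite generating set of its image in $\mathscr C/(e_{\mu})$ (handled by the inductive hypothesis for $S\setminus\{\mu\}$) together with a finite generating set of $J\cap I_{\{\mu\}}$, by lifting and combining. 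The base case would be $S=\{\lambda\}$, where $I_{\{\lambda\}}=\mathscr C e_{\lambda}$ and one analyses the ideals inside it using the structure of $\mathscr C(p,q)$ (simple, or a sum of two simple ideals).

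The step I expect to be the real obstacle is exactly this --- the claim that $I_S$ is Noetherian for finite $S$ --- because the reduction above does not, as it stands, close. When $z=\infty$ the ideal $I_{\{\mu\}}=\mathscr C e_{\mu}$ is isomorphic as a left module to $\mathscr C/(e_{\mu})\cong\mathscr C(p,q,\infty)$, which is \emph{not} Noetherian; and the null generators $e_{\nu}$ with $\nu\in Z\setminus S$ multiply freely into $I_S$, so $I_S$ carries infinitely many basis monomials and an ideal below it can fail to be captured by finitely many elements. Likewise, when $p+q=\infty$ one cannot quote Theorem 3.3 as a black box, since it requires \emph{both} $p+q<\infty$ and $z<\infty$, and the base case must instead use simplicity of $\mathscr C(p,q)$ directly. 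Controlling how a subideal of $I_S$ meets these infinitely many ``outside'' monomials is precisely the point that the one-line justification leaves unaddressed, and is where a complete argument must do its real work --- or where, on closer inspection, the statement would need to be qualified.
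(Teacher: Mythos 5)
You have reconstructed the paper's intended argument exactly: its entire proof of this corollary is the sentence preceding it, namely that $I_{S}$ is Noetherian for every finite $S\subseteq Z$ ``by Theorem 3.3,'' combined with Theorem 3.4. So the obstacle you single out is not a defect of your write-up but of that sentence, and your closing suspicion that the statement ``would need to be qualified'' is correct. Theorem 3.3 assumes both $p+q<\infty$ and $z<\infty$ and concerns $\mathscr{C}$ itself; it gives no control over the ideals of $\mathscr{C}$ lying inside $I_{S}$ when $z=\infty$, where the null generators outside $S$ keep acting. Concretely, let $z=\infty$, fix $\mu\in Z$, and let $J$ be the ideal of $\mathscr{C}$ generated by $\{e_{\mu}e_{\nu}:\nu\in Z,\ \nu\ne\mu\}$. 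Since $e_{\mu}we_{\mu}=0$ for every $w\in\mathscr{C}$ (move $e_{\mu}$ across a basis monomial up to sign and use $e_{\mu}^{2}=0$), we get $I_{\{\mu\}}^{2}=0$, so $J\subseteq I_{\{\mu\}}$ is nilpotent. But any finite subset of $J$ involves only finitely many null indices $\nu_{1},\ldots,\nu_{N}$, and the ideal it generates lies in the span of basis monomials containing $e_{\mu}$ together with at least one of $e_{\nu_{1}},\ldots,e_{\nu_{N}}$; that span misses $e_{\mu}e_{\nu}$ for every fresh $\nu$. Hence $J$ is a nilpotent ideal that is not finitely generated, and the ideals $(e_{\mu}e_{\nu_{1}},\ldots,e_{\mu}e_{\nu_{N}})$ form a strictly increasing chain inside $I_{\{\mu\}}$, so $I_{\{\mu\}}$ is not Noetherian. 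This is precisely the ``free multiplication from $Z\setminus S$'' phenomenon you describe, and it shows why your induction could not close: the claim it rests on fails already at $|S|=1$.

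What does survive, along the lines of your sketch, is the case $z<\infty$. If moreover $p+q<\infty$ you may quote Theorem 3.3 directly. If $p+q=\infty$, then $\mathscr{C}(p,q)$ is simple, and $\mathscr{R}=\bigoplus_{\emptyset\ne K\subseteq Z}\mathscr{C}(p,q)e_{K}$ is a direct sum of $2^{z}-1$ simple $\mathscr{C}(p,q)$-bimodules (moving an element of $\mathscr{C}(p,q)$ past $e_{K}$ only twists by the grade sign, which does not change the sub-bimodule lattice), so $\mathscr{R}$ has finite length as a $\mathscr{C}(p,q)$-bimodule and every ideal of $\mathscr{C}$ contained in $\mathscr{R}$ is generated by at most $2^{z}-1$ elements; the same argument, with the length doubled, covers the case where $\mathscr{C}(p,q)$ splits into two simple ideals. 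For $z=\infty$ the corollary must be either restricted to $z<\infty$ or weakened to what Theorem 3.4 actually yields: every nilpotent ideal is \emph{contained in} a finitely generated nilpotent ideal, namely some $I_{S}$ with $S$ finite, which is generated by the $|S|$ elements $e_{\lambda}$, $\lambda\in S$.
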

\par
\medskip
%\begin{references}

%\end{references}
\end{document}